\newtheorem{theorem}{\sc Theorem}[section]
\newtheorem{proposition}[theorem]{\sc Proposition}
\newtheorem{notation}[theorem]{\sc Notation}
\newtheorem{lemma}[theorem]{\sc Lemma}
\theoremstyle{definition}
\newtheorem{definition}[theorem]{\sc Definition}
\newtheorem{example}[theorem]{\sc Example}
\theoremstyle{remark}
\newtheorem{remark}[theorem]{\sc Remark}
\newtheorem{remarks}[theorem]{\sc Remarks}
\begin{document}

\title{Monadic Decompositions and Classical Lie Theory}
\author{Alessandro Ardizzoni}
\address{University of Turin, Department of Mathematics "G. Peano", via Carlo Alberto 10, I-10123 Torino, Italy}  \email{alessandro.ardizzoni@unito.it}
\urladdr{www.unito.it/persone/alessandro.ardizzoni}

\author{Jos\'{e} G\'{o}mez-Torrecillas}
\address{Universidad de Granada, Departamento de \'{A}lgebra. Facultad de Ciencias, E-18071-Granada, Spain}
\email{gomezj@ugr.es}
\urladdr{http://www.ugr.es/local/gomezj}

\author{Claudia Menini}
\address{University of Ferrara, Department of Mathematics, Via Machiavelli
35, Ferrara, I-44121, Italy} \email{men@unife.it}
\urladdr{http://www.unife.it/utenti/claudia.menini}

\subjclass[2010]{Primary 18C15; Secondary 16S30}
\thanks{This paper was written while the first and the last authors were members of
GNSAGA. The second author thanks the members of the Dipartimento di Matematica at the University of Ferrara for their warm hospitality during his visit in February 2010, when the work on this paper was initiated. Research partially supported by the grant MTM2010-20940-C02-01 from the MICINN and FEDER. The authors are grateful to the referee for several useful comments,  and to Bachuki Mesablishvili for pointing to us the bibliography on monadic decompositions.}

\begin{abstract}
We show that the functor from bialgebras to vector spaces sending a bialgebra to its subspace of primitives has monadic length at most $2$.
\end{abstract}

\keywords{Monads, Lie algebras}
\maketitle

\section*{Introduction}
Given a functor $R_0 : \mathcal{A} \to \mathcal{B}_0$ with left adjoint $L_0 : \mathcal{B}_{0} \to \mathcal{A}$ we get, following \cite{AHW,MS}, and under suitable hypotheses, a sequence of adjoint pairs of functors

\begin{equation*}
\xymatrix@C=2cm{
\mathcal{A}\ar@<.5ex>[d]^{R_0}&\mathcal{A}\ar@<.5ex>[d]^{R_1}\ar[l]_{\mathrm{Id}_{\mathcal{A}}}&\mathcal{A}\ar@<.5ex>[d]^{R_2}\ar[l]_{\mathrm{Id}_{\mathcal{A}}}&\dots \ar[l]_{\mathrm{Id}_{\mathcal{A}}}\\
\mathcal{B}_0\ar@<.5ex>@{.>}[u]^{L_0}&\mathcal{B}_1\ar@<.5ex>@{.>}[u]^{L_1} \ar[l]_{U_{0,1}}&\mathcal{B}_2 \ar@<.5ex>@{.>}[u]^{L_2} \ar[l]_{U_{1,2}}&\dots\ar[l]_{U_{2,3}}
  }
  \end{equation*}
where for $i \geq 0$, $\mathcal{B}_{i+1}$ is the Eilenberg-Moore category of the monad $(L_i,R_i)$, $R_{i+1}$ is the comparison functor, and $U_{i,i+1}$ is the corresponding forgetful functor.  It is natural to inquire wether this process stops, as was done in \cite{AHW,MS}. To be more specific, the monadic length of $R_0$ is the first $N$ such that $U_{N,N+1}$ is an isomorphism of categories. In many basic examples, the functor $R_0$ is monadic and, therefore, it has monadic length  at most $1$. In this note, we show that the functor $P$ from  bialgebras to vector spaces sending a bialgebra to its subspace of primitives has monadic length $2$ (Theorem \ref{primitives}).

Section \ref{monadic} contains some remarks on the monadic decompositions of functors studied in \cite{AHW, MS} and their relationship with idempotent monads (\cite{AT}). The basic case of the adjoint pair encoded by a bimodule over unital rings is described in Remark \ref{bimodules}, with an eye on descent theory for modules. We also study the existence of comonadic decompositions under separability conditions (Proposition \ref{separable}).

Section \ref{examples} contains the aforementioned monadic decomposition of monadic length at most $2$ of the functor $P$ from bialgebras to vector spaces.

\section{Monadic decompositions}\label{monadic}

Consider categories $\mathcal{A}$ and $\mathcal{B}$.  Let $\left( L:\mathcal{B}\rightarrow \mathcal{A},R:\mathcal{A}\rightarrow
\mathcal{B}\right) $ be an adjunction with unit $\eta $ and counit $\epsilon
$, and consider the monad $\left( RL,R\epsilon L,\eta \right)$ generated on $\mathcal{B}$. By $\mathcal{B}_1$ we denote its Eilenberg-Moore category of algebras.  
Hence we can consider the so-called \emph{comparison functor} of the
adjunction $\left( L,R\right) $ i.e. the functor%
\begin{equation*}
K :\mathcal{A}\rightarrow \mathcal{B}_1,{\mathcal{\qquad }}%
KX:=\left( RX,R\epsilon X\right) ,{\mathcal{\qquad }}Kf:=Rf.
\end{equation*}%

Recall that the functor $R:\mathcal{A}%
\rightarrow \mathcal{B}$ is called \emph{monadic} (tripleable in Beck's
terminology \cite[Definition 3', page 8]{Beck}) whenever the comparison functor
$K:\mathcal{A}\rightarrow \mathcal{B}_1$ is an equivalence of
categories.

\subsection{Idempotent monads and monadic decompositions}
The notion of an idempotent monad is, as we will see below, tightly connected with the monadic length of a functor.

\begin{definition}
\cite[page 231]{AT} A monad $\left( Q,m,u\right) $ with multiplication $m$ and unit $u$ is called \emph{idempotent%
} whenever $m$ is an isomorphism. An adjunction $\left( L,R\right) $ is
called \emph{idempotent} whenever the associated monad is idempotent.
\end{definition}

There are several basic characterizations of idempotent adjunctions (see \cite{AT,MS}). In particular, idempotency of an adjunction means equivalently that any of the four factors appearing in the two triangular identities of the adjoint pair $(L,R)$ is an isomorphism (\cite[Proposition 2.8]{MS}).\medskip

In the following we will denote by $U : \mathcal{B}_1 \to \mathcal{B}$ the forgetful functor and by $F : \mathcal{B} \to \mathcal{B}_1$ the free functor associated to an adjunction $(L,R)$.

\begin{remark}\label{re:etaU}
  Note that the adjunctions $(F,U)$ and $(L,R)$ have the same associated monad so that $\left( L,R\right) $ is idempotent if and only if $\left( F,U\right) $ is  if and only if, by \cite[Proposition 2.8]{MS}, one has $\eta U$ is an isomorphism.
\end{remark}

\begin{proposition}\label{pro:idempotent} For and adjunction $\left( L:\mathcal{B}\rightarrow \mathcal{A},R:%
\mathcal{A}\rightarrow \mathcal{B}\right) $ with unit $\eta $
and counit $\epsilon$, the
following assertions are equivalent.

\begin{enumerate}[(a)]

\item\label{idempotente} $\left( L,R\right) $ is idempotent,

\item\label{EMisomorfismo} the structure map of every object in $\mathcal{B}_1$ is an isomorphism,

\item\label{unidad}  $LU$ is a left adjoint of the comparison functor $K:%
\mathcal{A}\rightarrow \mathcal{B}_1$ of $\left( L,R\right) $, and  $\eta U=U\eta _{1}$, where $\eta _{1}$ is the unit of the new adjunction (being the new counit still $\epsilon$).

\end{enumerate}
 Moreover, if one of these conditions holds, then $LU$ is full and faithful.
\end{proposition}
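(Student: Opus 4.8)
The plan is to derive (a)$\Leftrightarrow$(b) directly from Remark \ref{re:etaU}, to establish (a)$\Leftrightarrow$(c) by writing down the only possible candidate for the unit $\eta_1$ and reducing the triangular identities to those of $(L,R)$, and finally to deduce full faithfulness of $LU$ from the invertibility of that unit. I would begin with (a)$\Leftrightarrow$(b). Every object $(B,b)$ of $\mathcal{B}_1$ satisfies the unit axiom $b\circ\eta_B=\mathrm{id}_B$, so $\eta_B$ is a section of its structure map $b$; consequently $\eta_B$ is invertible if and only if $b$ is, and in that case $b=\eta_B^{-1}$. Since the component of $\eta U$ at $(B,b)$ is precisely $\eta_B$, the statement that every structure map is an isomorphism is equivalent to $\eta U$ being an isomorphism, hence to (a) by Remark \ref{re:etaU}.

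For (a)$\Leftrightarrow$(c), the key computation is that $UK=R$ and that, for an algebra $(B,b)$, one has $KLU(B,b)=KLB=(RLB,R\epsilon_{LB})$, the free algebra on $B$. The prescription $U\eta_1=\eta U$ then forces the underlying arrow of $\eta_1(B,b)$ to be $\eta_B\colon B\to RLB$, and I would first determine when this $\eta_B$ is a morphism in $\mathcal{B}_1$: the algebra condition reads $\eta_B\circ b=R\epsilon_{LB}\circ RL(\eta_B)$, and its right-hand side equals $R(\epsilon_{LB}\circ L\eta_B)=\mathrm{id}_{RLB}$ by a triangular identity of $(L,R)$. Thus $\eta_B$ is an algebra map exactly when $\eta_B\circ b=\mathrm{id}_{RLB}$. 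Under (a) this holds, because $\eta_B$ is then invertible with inverse $b$, so $\eta_1$ is well defined (its naturality being inherited from that of $\eta$ through faithfulness of $U$). The two triangular identities for $LU\dashv K$ with counit $\epsilon$ (legitimate since $LUK=LR$) then become, after applying the faithful functor $U$, the identities $\epsilon_{LB}\circ L\eta_B=\mathrm{id}_{LB}$ and $R\epsilon_X\circ\eta_{RX}=\mathrm{id}_{RX}$ of the original adjunction. Conversely, assuming (c), each $\eta_1(B,b)$ is an algebra map with underlying arrow $\eta_B$, so the same computation gives $\eta_B\circ b=\mathrm{id}_{RLB}$; combined with $b\circ\eta_B=\mathrm{id}_B$ this makes every $\eta_B$ invertible, i.e.\ $\eta U$ an isomorphism, and (a) follows once more from Remark \ref{re:etaU}.

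For the last assertion, once (c) holds the functor $LU$ is a left adjoint with unit $\eta_1$, and $U\eta_1=\eta U$ is an isomorphism by idempotency. Since $U$ reflects isomorphisms (the inverse in $\mathcal{B}$ of the underlying arrow of an algebra isomorphism is automatically an algebra map), $\eta_1$ is a natural isomorphism, and a left adjoint whose unit is invertible is full and faithful; hence $LU$ is full and faithful. The crux of the argument, and the step I expect to require the most care, is the identity $\eta_B\circ b=\mathrm{id}_{RLB}$: it is simultaneously the condition for $\eta_1$ to take values in $\mathcal{B}_1$ and, together with the unit axiom, the precise extra datum that renders $\eta_B$ invertible, thereby pinning (c) to idempotency. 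The remaining verifications are the routine triangular-identity bookkeeping, which simply transfers the triangular identities of $(L,R)$ across the faithful forgetful functor $U$.
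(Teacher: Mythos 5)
Your proof is correct and follows essentially the same route as the paper's: the same key identity $\eta_B\circ b=\mathrm{id}_{RLB}$ (the algebra-map condition for $\eta_B$ collapsed by a triangular identity) drives both the construction of $\eta_1$ and the converse, and the final full-faithfulness claim uses Remark \ref{re:etaU} together with the fact that $U$ reflects isomorphisms, exactly as in the paper. The only differences are organizational (you prove (a)$\Leftrightarrow$(b) and (a)$\Leftrightarrow$(c) instead of the paper's cycle, apply the algebra-map computation to general rather than free algebras in (c)$\Rightarrow$(a), and spell out the triangular identities for $(LU,K)$ that the paper leaves implicit), which is harmless.
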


\begin{proof}
\eqref{idempotente} $\Rightarrow$ \eqref{EMisomorfismo}.  By \cite[Proposition 2.8]{MS}, we have that $\eta R L = R L \eta$. Now, for any algebra $(X, \mu) \in \mathcal{B}_1$, we have that $\mu \circ \eta X = 1_X$. Moreover, by naturality of $\eta$, we know that $\eta X \circ \mu = R L \mu\circ\eta R L X=R L \mu\circ R L \eta X=1_{RLX}$. Therefore, $\mu$ is an isomorphism.

\eqref{EMisomorfismo} $\Rightarrow$ \eqref{unidad}. If $(X, \mu)$ is an algebra over $RL$, then $\mu : RLX \to X$ is an homomorphism of $RL$--algebras. Now, if $\mu$ is an isomorphism, then, necessarily, $\mu = (\eta X)^{-1}$. We get easily that $\eta X$ is a morphism in $\mathcal{B}_1$. Therefore, $\eta$ will serve as the unit for an adjunction $(LU,K)$ (being the counit $\epsilon$).

\eqref{unidad} $\Rightarrow$ \eqref{idempotente}. Given an object $Y$ of $\mathcal{A}$, consider its free $RL$--algebra  $(RY, R \epsilon Y)$. Since, by assumption, $\eta RY$ is a homomorphism of $RL$--algebras, we get the identity $R \epsilon LR Y \circ R L \eta RY = \eta R Y \circ R \epsilon Y$, which implies, by the triangular identities for the adjunction $(L,R)$, that $\eta R Y \circ R \epsilon Y = 1_{RLRY}$, whence $R \epsilon Y$ is an isomorphism. With $Y = LX$ for any object $X$ of $\mathcal{B}$, one obtains that $R \epsilon L X$ is an isomorphism. Hence, $(L,R)$ is idempotent.

Let us prove the last part of the statement. By Remark \ref{re:etaU}, condition \eqref{idempotente} is equivalent to $\eta U$ isomorphism.  From $\eta U=U\eta _{1}$ and the fact that $U$ reflects isomorphisms we deduce that $\eta _{1}$ is an isomorphism so that $LU $ is full and
faithful.
\end{proof}

\begin{remark}\label{rem:ff}
 By \cite[Proposition 2.8]{MS}, if $L$ is full and faithful, then the adjunction $(L,R)$ is idempotent. On the other hand, since the units of the adjunctions $(L,R)$ and $(F,U)$ are equal, we get from Proposition \ref{pro:idempotent} that $L$ is full and faithful if and only if $U$ is an isomorphism of categories if and only if $U$ is an equivalence of categories.
\end{remark}

Before defining monadic decompositions and monadic length, we derive the following consequence of Proposition \ref{pro:idempotent}, which is interpreted in the classical setting of epimorphisms of rings in Example \ref{ex:GT}.

\begin{proposition}
\label{pro:Rff}Let $\left( L,R\right) $ be an adjunction. The following
are equivalent.

\begin{itemize}
\item[$(1)$] $R$ is full and faithful.

\item[$\left( 2\right) $] $\left( L,R\right) $ is idempotent and $R$ is
monadic.
\end{itemize}
\end{proposition}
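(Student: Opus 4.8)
The plan is to reduce both implications to the standard fact that, for an adjunction $(L,R)$ with counit $\epsilon$, the functor $R$ is full and faithful if and only if $\epsilon$ is a natural isomorphism (the dual of the criterion for $L$ recorded in \cite[Proposition 2.8]{MS}). Once this is in hand, the bridge to monadicity is provided by Proposition \ref{pro:idempotent}, which under idempotency realizes the comparison functor $K$ as the right adjoint of $LU$, with counit $\epsilon$ and unit $\eta_1$ satisfying $\eta U = U\eta_1$.

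For $(1) \Rightarrow (2)$, I would first observe that full-faithfulness of $R$ makes $\epsilon$ an isomorphism, whence the multiplication $R\epsilon L$ of the monad is an isomorphism and $(L,R)$ is idempotent. Idempotency lets me invoke Proposition \ref{pro:idempotent} to obtain the adjunction $(LU,K)$; its unit $\eta_1$ is an isomorphism, since $\eta U = U\eta_1$ is an isomorphism by Remark \ref{re:etaU} and $U$ reflects isomorphisms (this is exactly the full-faithfulness of $LU$ recorded at the end of Proposition \ref{pro:idempotent}), while its counit is $\epsilon$, already known to be an isomorphism. An adjunction whose unit and counit are both isomorphisms is an equivalence, so $K$ is an equivalence and $R$ is monadic.

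For $(2) \Rightarrow (1)$, idempotency again yields, via Proposition \ref{pro:idempotent}, the adjunction $(LU,K)$ with counit $\epsilon$. Monadicity says $K$ is an equivalence, hence in particular full and faithful; for the adjunction $(LU,K)$, full-faithfulness of the right adjoint $K$ forces its counit $\epsilon$ to be an isomorphism. By the criterion recalled in the opening sentence, $R$ is then full and faithful.

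The only genuinely delicate point is the bookkeeping that identifies the counit of the newly produced adjunction $(LU,K)$ with the original counit $\epsilon$, and correspondingly its unit with $\eta_1$; this is precisely the content of Proposition \ref{pro:idempotent} and must be used carefully, so that the two characterizations ``$R$ full and faithful $\Leftrightarrow \epsilon$ isomorphism'' and ``$K$ full and faithful $\Leftrightarrow$ counit of $(LU,K)$ isomorphism'' can be played against one another. Everything else is a formal consequence of the equivalence between having both unit and counit invertible and being an equivalence of categories.
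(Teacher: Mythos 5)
Your proof is correct and follows essentially the same route as the paper's: both directions hinge on the equivalence ``$R$ full and faithful $\Leftrightarrow$ $\epsilon$ isomorphism'' together with Proposition \ref{pro:idempotent}, which supplies the adjunction $(LU,K)$ with counit still $\epsilon$, so that in one direction invertibility of $\eta_1$ (full-faithfulness of $LU$) and of $\epsilon=\epsilon_1$ makes $K$ an equivalence, and in the other direction monadicity forces $\epsilon_1=\epsilon$ to be invertible. The bookkeeping you flag as the delicate point is exactly what the paper relies on as well, so no gap remains.
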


\begin{proof} Let $\eta $ be the unit and $\epsilon $ be the counit of the adjunction $%
\left( L,R\right) $.

$\left( 1\right) \Rightarrow \left( 2\right) $. By assumption $\epsilon $ is an isomorphism so that $%
R\epsilon L$ is an isomorphism i.e. $\left( L,R\right) $ is idempotent.
By Proposition \ref{pro:idempotent}, we know that $\Lambda :=LU$ is a
left adjoint of the comparison functor $K:\mathcal{A}\rightarrow {%
\mathcal{B}_1}$ of $\left( L,R\right) $. Moreover $\Lambda $ is full and
faithful and $\epsilon A=\epsilon _{1}A$ where $\epsilon _{1}$ is the counit
of the adjunction $\left( \Lambda ,K\right) .$ Thus $\epsilon _{1}$ is an
isomorphism i.e. $K$ is full and faithful too. Hence $K$ is an
equivalence.

$\left( 2\right) \Rightarrow \left( 1\right) $. Since $\left( L,R\right) $
is idempotent, by Proposition \ref{pro:idempotent}, we get $\epsilon
A=\epsilon _{1}A.$ Since $R$ is monadic, the comparison functor is an equivalence and hence $\epsilon
_{1}A$ is an isomorphism. Hence $\epsilon A$ is an isomorphism so that $R$
is full and faithful.
\end{proof}

\begin{example}\label{ex:GT}
Let us consider a morphism of rings $\pi :B\rightarrow A$, and its
canonical associated adjunction
\begin{equation*}
L:\mathrm{Mod}\text{-}B\rightarrow \mathrm{Mod}\text{-}A,\qquad R:\mathrm{Mod%
}\text{-}A\rightarrow \mathrm{Mod}\text{-}B.
\end{equation*}%
By \cite[Proposition 1.2, page 226]{Stenstroem}, $\pi $ is an epimorphism if
and only if the counit of the adjunction is an isomorphism. This is
equivalent to say that $R$ is full and faithful. Thus, by Proposition \ref%
{pro:Rff}, $\pi $ is an epimorphism if and only if $\left( L,R\right) $
is idempotent and $R$ is monadic. Note that $L$ needs not to be full and faithful.
Thus, when $\pi $ is an epimorphism, since $R$ is monadic, it has a monadic
decomposition of monadic length $1$ in the sense of Definition \ref{def:
comparable} but a monadic decomposition of (essential) length $0$ in the
sense of \cite[Definition 2.1]{AHW}.
\end{example}

\begin{definition}
\label{def: comparable} (See \cite[Definition 2.1]{AHW} and \cite[%
Definitions 2.10 and 2.14]{MS}) Fix a $N\in \mathbb{N}$. We say that a functor $R$ has a\emph{\ monadic decomposition of monadic
length }$N$\emph{\ }whenever there exists a sequence $\left( R_{n}\right)
_{n\leq N}$ of functors $R_{n}$ such that

1) $R_{0}=R$;

2) for $0\leq n\leq N$, the functor $R_{n}$ has a left adjoint functor $%
L_{n} $;

3) for $0\leq n\leq N-1$, the functor $R_{n+1}$ is the comparison functor
induced by the adjunction $\left( L_{n},R_{n}\right) $ with respect to its
associated monad;

4) $L_{N}$ is full and faithful while $L_{n}$ is not full and faithful for $%
0\leq n\leq N-1.$

Compare with the construction performed in \cite[1.5.5, page 49]{Manes-PhD}.

Note that for functor $R:\mathcal{A}\rightarrow \mathcal{B}$ having a
monadic decomposition of monadic length\emph{\ }$N$, we have a diagram

\begin{equation}
\xymatrix@C=2cm{
\mathcal{A}\ar@<.5ex>[d]^{R_0}&\mathcal{A}\ar@<.5ex>[d]^{R_1}\ar[l]_{\mathrm{Id}_{\mathcal{A}}}&\mathcal{A}\ar@<.5ex>[d]^{R_2}\ar[l]_{\mathrm{Id}_{\mathcal{A}}}&\cdots \ar[l]_{\mathrm{Id}_{\mathcal{A}}}\quad\cdots&\mathcal{A}\ar@<.5ex>[d]^{R_N}\ar[l]_{\mathrm{Id}_{\mathcal{A}}}\\
\mathcal{B}_0\ar@<.5ex>@{.>}[u]^{L_0}&\mathcal{B}_1\ar@<.5ex>@{.>}[u]^{L_1} \ar[l]_{U_{0,1}}&\mathcal{B}_2 \ar@<.5ex>@{.>}[u]^{L_2} \ar[l]_{U_{1,2}}&\cdots\ar[l]_{U_{2,3}}\quad\cdots&\mathcal{B}_N\ar@<.5ex>@{.>}[u]^{L_N}\ar[l]_{U_{N-1,N}}
  }\label{diag:MonadicDec}\end{equation}
where $\mathcal{B}_{0}=\mathcal{B}$ and, for $1\leq n\leq N,$

\begin{itemize}
\item $\mathcal{B}_{n}$ is the category of $\left( R_{n-1}L_{n-1}\right) $%
-modules ${_{R_{n-1}L_{n-1}}\mathcal{B}}_{n-1}$;

\item $U_{n-1,n}:\mathcal{B}_{n}\rightarrow \mathcal{B}_{n-1}$ is the
forgetful functor ${_{R_{n-1}L_{n-1}}}U$.
\end{itemize}

We will denote by $\eta _{n}:\mathrm{Id}_{\mathcal{B}_{n}}\rightarrow
R_{n}L_{n}$ and $\epsilon _{n}:L_{n}R_{n}\rightarrow \mathrm{Id}_{\mathcal{A}%
}$ the unit and counit of the adjunction $\left( L_{n},R_{n}\right) $
respectively for $0\leq n\leq N$. Note that one can introduce the forgetful
functor $U_{m,n}:\mathcal{B}_{n}\rightarrow \mathcal{B}_{m}$ for all $m\leq n
$ with $0\leq m,n\leq N$.
\end{definition}

\begin{remarks}
1) Assume that $R$ fits into a diagram such as \eqref{diag:MonadicDec}. If $R_{N-1}$ is monadic i.e. $R_N$ is a category equivalence, then obviously $L_N$ is full and faithful so that $R_0$ has a monadic decomposition of monadic length at most $N$. Nevertheless if $R_0$ has monadic length $N$, then $R_{N}$ needs not to be an equivalence.

2) The notion of \emph{comonadic decomposition of comonadic length }$N$ can be
easily introduced. In this case we will use the notations $\left(
L^{n},R^{n}\right) _{n\in \mathbb{N}}$ with superscripts and require that $R^{N}$ is full and faithful.
\end{remarks}

\begin{proposition}
\label{pro:idemmonad2}Let $\left( L:\mathcal{B}\rightarrow \mathcal{A},R:%
\mathcal{A}\rightarrow \mathcal{B}\right) $ be an idempotent adjunction.
Then $R:\mathcal{A}\rightarrow \mathcal{B}$ has a\emph{\ }monadic
decomposition of monadic length at most $1$.
\end{proposition}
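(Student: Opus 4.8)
The plan is to show that when $(L,R)$ is idempotent, the comparison functor $R_1 = K$ already has a full and faithful left adjoint, so that the monadic decomposition terminates at step $1$. Concretely, I must exhibit $R_1$ as admitting a left adjoint $L_1$ with $L_1$ full and faithful; according to Definition \ref{def: comparable}, this forces the monadic length to be at most $1$.

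First I would invoke Proposition \ref{pro:idempotent}. Since $(L,R)$ is idempotent, part \eqref{unidad} of that proposition tells us that $LU$ is a left adjoint of the comparison functor $K : \mathcal{A} \to \mathcal{B}_1$. Thus I set $L_1 := LU$ and $R_1 := K$, obtaining an adjoint pair $(L_1, R_1)$. This directly supplies conditions 1), 2), and 3) of the definition of a monadic decomposition: $R_0 = R$ has left adjoint $L_0 = L$, the functor $R_1$ is by construction the comparison functor induced by $(L_0,R_0)$, and $R_1$ has the left adjoint $L_1 = LU$.

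Next I would verify condition 4), namely that $L_1$ is full and faithful. This is exactly the content of the final clause of Proposition \ref{pro:idempotent}, which asserts that under idempotency $LU$ is full and faithful. Hence $L_1 = LU$ is full and faithful, and the sequence $(R_0, R_1)$ realizes a monadic decomposition with $L_1$ full and faithful. Since the definition of monadic decomposition of length \emph{at most} $1$ only requires a terminating sequence with the last left adjoint full and faithful (it need not insist that $L_0$ fail to be full and faithful, which would pin down the length exactly), this establishes the claim.

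I do not expect any genuine obstacle here, since the proposition is essentially a repackaging of Proposition \ref{pro:idempotent}: all the real work — producing the left adjoint of the comparison functor and checking that it is full and faithful — has already been done. The only point demanding a little care is the bookkeeping of Definition \ref{def: comparable}, in particular reading ``monadic length at most $1$'' correctly so that the possibility $L_0$ itself being full and faithful (length $0$) is subsumed; one simply notes that if $L = L_0$ happens to be full and faithful then by Remark \ref{rem:ff} the decomposition has length $0$, and otherwise it has length exactly $1$, either way at most $1$.
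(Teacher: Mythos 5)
Your proof is correct and follows exactly the paper's argument: the paper's own proof is a one-line appeal to Proposition \ref{pro:idempotent}, which yields $L_{1}=L_{0}U_{0,1}$ as a full and faithful left adjoint of the comparison functor. Your additional bookkeeping about Definition \ref{def: comparable} and the ``at most $1$'' reading (subsuming the case where $L_{0}$ is already full and faithful) is sound and merely makes explicit what the paper leaves implicit.
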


\begin{proof}
By Proposition \ref{pro:idempotent}, $L_{1}=L_{0}U_{0,1}$ is full and
faithful.
\end{proof}

\begin{remark}
It follows from Remark \ref{rem:ff} that condition $4)$ in Definition \ref{def: comparable} is equivalent to the requirement that the forgetful functor
$U_{N,N+1} :\mathcal{B}_{N + 1} \to \mathcal{B}_N$ is an isomorphism of categories.
Thus, if  $R:\mathcal{A}\rightarrow \mathcal{B}$ has a\emph{\ }monadic
decomposition of monadic length $N\in\mathbb{N}$, then we can consider the comparison functor $R_{N+1}:\mathcal{A}%
\rightarrow \mathcal{B}_{N+1}$ of $\left( L_{N},R_{N}\right) .$
Moreover, still by Remark \ref{rem:ff}, $L_{N}$ full and faithful  implies that the adjunction $\left(
L_{N},R_{N}\right) $ is idempotent. Hence, by Proposition \ref{pro:idempotent}, $L_{N+1}:=L_{N}U_{N,N+1}$ is a left adjoint of $R_{N+1}$ (and $%
L_{N+1}$ is full and faithful too). Note that the fact that $R_{N+1}$ is a
right adjoint is assumed from the very beginning in \cite[Definition 2.1]{AHW}%
. By Proposition \ref{pro:idempotent} again, we deduce that $\eta
_{N}U_{N,N+1}=U_{N,N+1}\eta _{N+1}$ and $\epsilon _{N}A=\epsilon _{N+1}A$
where $\eta _{n}$ is the unit and $\epsilon _{n}$ is the counit of the
adjunction $\left( L_{n},R_{n}\right) $ for all $n\leq N+1.$ Iterating this
process we get that for all $M\geq N,$ the tower in (\ref{diag:MonadicDec})
can be extended with adjoints $\left( L_{M},R_{M}\right) $ where $L_{M}$ is
full and faithful so that $U_{M,M+1}:\mathcal{B}_{M+1}\rightarrow \mathcal{B}%
_{M}$ is a category isomorphism. Moreover $\eta _{M}U_{M,M+1}=U_{M,M+1}\eta
_{M+1}$ and $\epsilon _{M}A=\epsilon _{M+1}A.$ By the foregoing we have that%
\begin{equation*}
R=R_{0}=U_{0,1}\circ U_{1,2}\circ \cdots \circ U_{N-1,N}\circ R_{N}
\end{equation*}%
where $U_{0,1},U_{1,2},\cdots ,U_{N-1,N}$ are monadic functors but not
category isomorphisms. Moreover this is a maximal decomposition of this
form. This is essentially \cite[Remarks 2.2]{AHW}.
\end{remark}

\begin{remark}\label{coalgebras}
If $R : \mathcal{A} \to \mathcal{B}$ has a monadic decomposition of length $N$, then, since $L_N : \mathcal{B}_N \to \mathcal{A}$ is full and faithful, the dual of Proposition \ref{pro:Rff} gives that $L_N$ is a comonadic functor and $(L_N, R_N)$ is coidempotent. Thus, the comparison functor $C: \mathcal{B}_N \to \mathcal{A}^{N}$, where $\mathcal{A}^N$ denotes the category of $L_NR_N$--coalgebras, is an equivalence of categories.
\end{remark}

\subsection{Essentially surjective}

The following result determines the objects which are images of right
adjoint functors under suitable assumptions. This can be regarded as a sort
of descent theory for these functors.

\begin{notation}
Let $R:\mathcal{A}\rightarrow \mathcal{B}$. We will denote by $\mathrm{Im}R$
the full subcategory of $\mathcal{B}$ consisting of those objects $B\in
\mathcal{B}$ such that there is an object $A\in \mathcal{A}$ and an
isomorphism $B\cong RA$ in $\mathcal{B}$.

Recall that a functor $R:\mathcal{A}\rightarrow \mathcal{B}$ is \emph{%
essentially surjective} if $\mathrm{Im}R=\mathcal{B}$.
\end{notation}

\begin{proposition}
\label{teo: descentFunct}Suppose that $R:\mathcal{A}\rightarrow \mathcal{B}$
has a\emph{\ }monadic decomposition of monadic length $N\in
\mathbb{N}$. Let $n\in \left\{ 0,\ldots ,N\right\} $. Then

1) $\mathrm{Im}R\subseteq \mathrm{Im}U_{0,n}.$

2) $\mathrm{Im}R=\mathrm{Im}U_{0,n}$ whenever $R_{n}$ is essentially
surjective.

3) $\mathrm{Im}R=\mathrm{Im}U_{0,N}$.
\end{proposition}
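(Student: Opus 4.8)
The plan is to exploit the factorization $R=U_{0,n}\circ R_{n}$ that is built into the monadic tower of Definition \ref{def: comparable}. Since $R_{n}$ is the comparison functor associated to $\left( L_{n-1},R_{n-1}\right) $, one has $U_{n-1,n}\circ R_{n}=R_{n-1}$ (the forgetful functor simply discards the structure map that the comparison functor attaches). Composing these identities down the tower yields $U_{0,n}\circ R_{n}=R_{0}=R$ for every $n\in\left\{0,\ldots,N\right\}$. This single observation drives all three statements.

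For $1)$, if $B\in \mathrm{Im}R$, say $B\cong RA$ for some $A\in\mathcal{A}$, then $B\cong U_{0,n}\left( R_{n}A\right) $, so $B\in \mathrm{Im}U_{0,n}$; hence $\mathrm{Im}R\subseteq \mathrm{Im}U_{0,n}$. For $2)$, the inclusion $\subseteq$ is exactly $1)$, so it remains to prove $\supseteq$: given $B\cong U_{0,n}\left( Y\right) $ with $Y\in\mathcal{B}_{n}$, essential surjectivity of $R_{n}$ provides $A\in\mathcal{A}$ with $Y\cong R_{n}A$, whence $B\cong U_{0,n}\left( R_{n}A\right) =RA\in \mathrm{Im}R$. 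Both parts are thus immediate consequences of the factorization, with no further input required.

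For $3)$, by $2)$ applied with $n=N$ it suffices to show that the top comparison functor $R_{N}$ is essentially surjective. Here I would use that $R$ has monadic length exactly $N$, so that condition $4)$ of Definition \ref{def: comparable} forces $L_{N}$ to be full and faithful. The step I expect to carry the weight is the standard fact that a left adjoint is full and faithful precisely when its unit is a natural isomorphism; applying it to $\left( L_{N},R_{N}\right) $ shows that $\eta_{N}:\mathrm{Id}_{\mathcal{B}_{N}}\rightarrow R_{N}L_{N}$ is an isomorphism. Consequently, for each $Y\in\mathcal{B}_{N}$ the morphism $\eta_{N}Y:Y\rightarrow R_{N}\left( L_{N}Y\right) $ is an isomorphism, so $Y\in \mathrm{Im}R_{N}$; that is, $R_{N}$ is essentially surjective. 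Invoking $2)$ then gives $\mathrm{Im}R=\mathrm{Im}U_{0,N}$.

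The only substantive point is this last observation, namely that full faithfulness of $L_{N}$ forces $R_{N}$ to be essentially surjective via invertibility of the unit $\eta_{N}$; everything else is bookkeeping along the tower through the factorization $R=U_{0,n}\circ R_{n}$.
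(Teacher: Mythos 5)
Your proof is correct and follows essentially the same route as the paper, whose entire proof is the one-line observation that $U_{0,n}R_{n}=R_{0}=R$. You have simply made explicit the step the paper leaves implicit for part $3)$, namely that condition $4)$ of Definition \ref{def: comparable} ($L_{N}$ full and faithful) forces the unit $\eta_{N}$ to be an isomorphism, so that $R_{N}$ is essentially surjective and part $2)$ applies with $n=N$.
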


\begin{proof}
It follows from the equalities $ U_{0,n}R_{n}=R_{0}=R.  \label{form:UR}$
\end{proof}

\begin{remark}\label{bimodules}
\label{re:descent}Proposition \ref{teo: descentFunct} can be considered as a
``general dual descent theory" result. In fact the theorem states that the
objects of $\mathcal{B}=\mathcal{B}_{0}$ which are isomorphic to objects of
the form $RA$, for some $A\in \mathcal{A}$, are exactly those of the form $%
U_{0,N}B_{N}$ where $B_{N}\in \mathcal{B}_{N}$. In particular, when $N=1$,
i.e. $L_{1}$ is full and faithful, we have that the objects of $\mathcal{B}$
which are isomorphic to objects of the form $RA$, for some $A\in \mathcal{A}$%
, are exactly those of the form $U_{0,1}B_{1}$ where $B_{1}\in \mathcal{B}%
_{1}$. This is exactly the dual form of classical descent
theory for (bi)modules. In fact, let $S,$ $T$ be rings and let ${_{S}M_{T}}$ be
a bimodule. Consider the following adjunction
\begin{equation*}
\begin{tabular}{lll}
$L:{\mathcal{M}_{S}}\rightarrow {\mathcal{M}_{T},}$ &  & $R:{\mathcal{M}_{T}}%
\rightarrow {\mathcal{M}_{S}}$ \\
$LX=X\otimes _{S}M{,}$ &  & $RY={\mathrm{Hom}_{T}}\left( M,Y\right) {,}$%
\end{tabular}%
\end{equation*}%
between the category ${\mathcal{M}_{S}}$ of right $S$-modules and the
category ${\mathcal{M}_{T}}$ of right $T$-modules. The category ${\mathcal{M}%
_{T}}$ has (co)equalizers. By (dual) Beck's Theorem \cite[Proof of Theorem 1]%
{Beck}, the comparison functors $R_{1}$ and $L^{1}$ have a left adjoint $%
L_{1}$ and a right adjoint $R^{1}$ respectively.

Assume that ${M}$ is flat as a left $S$-module. Then $L=L^{0}$ is exact so
that, the dual of Beck's Theorem ensures that $R^{1}$ is full and faithful. Therefore, $L$ admits a comonadic decomposition of comonadic length at most $1$.
Thus we have that the objects of ${\mathcal{M}_{T}}$ which are isomorphic to
objects of the form $LX$, for some $X\in {\mathcal{M}_{S}}$, are exactly
those of the form $U^{0,1}X^{1}$ where $X^{1}$ is an object of the category of $LR$--coalgebras  $\left( {\mathcal{M}%
_{T}}\right)^1$. Hence the category $\left( {\mathcal{M}_{T}}\right)^1$
solves the descent problem for modules. When $M_T$ is finitely generated and projective, we have an isomorphism of comonads $LR \cong - \otimes_T M^* \otimes_S M$ where $M^* \otimes_S M$ is the comatrix coring associated to ${}_S M_T$ (see \cite{ElKaoutit/Gomez:2003}, and \cite{Gomez:2006,Gomez/Vercruysse:2007} for more general bimodules). Coalgebras over $LR$ are precisely the comodules over the $T$--coring $M^* \otimes_S M$.

Assume that ${M}$ is projective as a right $T$-module. Then $R=R_{0}$ is
exact so that, Beck's Theorem ensures that $L_{1}$ is full and faithful, and $R$ has a comonadic decomposition of length at most $1$.
\end{remark}

\subsection{Separability}

Let $\left( Q,m,u\right) $ be a monad on a category $\mathcal{B}$, with multiplication $m$ and unit $u$. A \emph{right
module functor} on $\left( Q,m,u\right) $ is a pair $\left( W,\mu \right) $
where $W:\mathcal{B}\rightarrow \mathcal{A}$ is a functor and $\mu
:WQ\rightarrow W$ is a natural transformation such that
\begin{equation*}
\mu \circ \mu Q=\mu \circ Wm\qquad \text{and}\qquad \mu \circ Wu=\mathrm{Id}%
_{Q}.
\end{equation*}%
A morphism $f:\left( W,\mu \right) \rightarrow \left( W^{\prime },\mu
^{\prime }\right) $ of right module functors is a natural transformation $%
f:W\rightarrow W^{\prime }$ such that $\mu ^{\prime }\circ fQ=f\circ \mu .$

It is clear that $\left( WQ,Wm\right) $ is a right module functor on $\left(
Q,m,u\right) $ and that $\mu :\left( WQ,Wm\right) \rightarrow \left( W,\mu
\right) $ is morphism of right module functors. We will say that $\left(
W,\mu \right) $ is \emph{relatively projective} whenever $\mu :\left(
WQ,Wm\right) \rightarrow \left( W,\mu \right) $ splits as a morphism of
right module functors. Explicitly this means that there is a morphism $%
\gamma :\left( W,\mu \right) \rightarrow \left( WQ,Wm\right) $ of right
module functors such that $\mu \circ \gamma =\mathrm{Id}_{\left( W,\mu
\right) }$ i.e. that there is a natural transformation $\gamma :W\rightarrow
WQ$ such that $\mu \circ \gamma =\mathrm{Id}_{W}$ and $Wm\circ \gamma
Q=\gamma \circ \mu $.

Let $\left( L:\mathcal{B}\rightarrow \mathcal{A},R:\mathcal{A}\rightarrow
\mathcal{B}\right) $ be an adjunction with unit $\eta $ and counit $\epsilon
$. Then $\left( L,\epsilon L\right) $ is a right module functor on $\left(
RL,R\epsilon L,\eta \right) .$ In fact $\epsilon L\circ \epsilon
LRL=\epsilon L\circ LR\epsilon L$ and $\epsilon L\circ L\eta =\mathrm{Id}%
_{RL}.$

The notion of a separable functor was introduced in \cite{Nastasescu/alt:1989}. This concept is motivated by various examples, being perhaps the most fundamental the following. Given a homomorphism of rings $R \to S$, then the restriction of scalars functor $\mathcal{M}_{S} \to \mathcal{M}_{R}$ is separable in the sense of \cite{Nastasescu/alt:1989} if and only if  the extension $R \to S$ is separable (i.e., the multiplication map $S \otimes_R S \to S$ splits as an $S$--bimodule epimorphism). In general, if $(L,R)$ is an adjunction, then $R$ is a separable functor if and only if its counit is a split natural epimorphism (\cite[Theorem 1.2]{Rafael}).

\begin{lemma}
\label{lem: sep=>proj}Let $\left( L:\mathcal{B}\rightarrow \mathcal{A},R:%
\mathcal{A}\rightarrow \mathcal{B}\right) $ be an adjunction. If $R$ is separable, then $\left( L,\epsilon L\right) $ is relatively
projective as a right module functor on $\left( RL,R\epsilon L,\eta \right) $
\end{lemma}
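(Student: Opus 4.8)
The plan is to read off a section from the separability hypothesis and then show that, after whiskering by $L$, it is already the desired splitting. Recall from the discussion preceding the statement that, by \cite[Theorem 1.2]{Rafael}, $R$ being separable is equivalent to the counit $\epsilon :LR\to \mathrm{Id}_{\mathcal{A}}$ being a split natural epimorphism. So the first step is to fix a natural transformation $\nu :\mathrm{Id}_{\mathcal{A}}\to LR$ with $\epsilon \circ \nu =\mathrm{Id}_{\mathrm{Id}_{\mathcal{A}}}$.

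Next, since here $\left( W,\mu \right) =\left( L,\epsilon L\right) $ and $\left( Q,m,u\right) =\left( RL,R\epsilon L,\eta \right) $, I would propose $\gamma :=\nu L:L\to LRL$ as candidate splitting, i.e.\ the transformation $\nu $ whiskered on the right by $L$, and check that it meets the two requirements spelled out in the definition of relative projectivity: that $\mu \circ \gamma =\mathrm{Id}_{L}$ and that $Wm\circ \gamma Q=\gamma \circ \mu $. The first is immediate by whiskering the splitting identity, namely $\mu \circ \gamma =\epsilon L\circ \nu L=\left( \epsilon \circ \nu \right) L=\mathrm{Id}_{L}$.

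The only computation needing care is the module-morphism identity $Wm\circ \gamma Q=\gamma \circ \mu $, and this is where I expect the (mild) obstacle to lie, purely in keeping the whiskerings straight. Here $Wm=LR\epsilon L$, $\gamma Q=\nu LRL$, and $\gamma \circ \mu =\nu L\circ \epsilon L$, so evaluated at an object $B$ the identity to be proved reads
\[
LR\epsilon LB\circ \nu \left( LRLB\right) =\nu \left( LB\right) \circ \epsilon \left( LB\right) .
\]
I would then observe that this is exactly the naturality square of $\nu $ for the morphism $\epsilon \left( LB\right) :LRLB\to LB$, since naturality gives $LR\left( \epsilon LB\right) \circ \nu \left( LRLB\right) =\nu \left( LB\right) \circ \epsilon \left( LB\right) $ and $LR\left( \epsilon LB\right) =LR\epsilon LB$. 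Hence the identity holds with no further hypotheses, $\gamma $ is a morphism of right module functors splitting $\mu $, and $\left( L,\epsilon L\right) $ is relatively projective. The crux is thus recognizing that the second module axiom for $\gamma =\nu L$ is not an extra condition but is forced by naturality of the section $\nu $.
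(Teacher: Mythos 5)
Your proof is correct and is essentially the paper's own argument: both take a section $\sigma$ (your $\nu$) of $\epsilon$ via Rafael's theorem, set $\gamma := \sigma L$, get $\epsilon L \circ \gamma = \mathrm{Id}_L$ by whiskering, and obtain $LR\epsilon L \circ \gamma RL = \gamma \circ \epsilon L$ from naturality of the section at $\epsilon L B$. The only difference is that you spell out the naturality square explicitly, which the paper leaves implicit.
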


\begin{proof}
By assumption, there is a natural transformation $\sigma :\mathrm{Id}_{%
\mathcal{A}}\rightarrow LR$ such that $\epsilon \circ \sigma =\mathrm{Id}_{%
\mathrm{Id}_{\mathcal{A}}}.$ Set $\gamma :=\sigma L.$ Then $\gamma $ is a
natural transformation such that $\epsilon L\circ \gamma =\mathrm{Id}_{L}$
and $LR\epsilon L\circ \gamma RL=\gamma \circ \epsilon L$. Then $\epsilon
L:\left( LRL,LR\epsilon L\right) \rightarrow \left( L,\epsilon L\right) $
splits as a morphism of right module functors.
\end{proof}

In the following result, part 3) may be compared, in its dual version, with
\cite[Proposition 3.16]{Mesab} and the results quoted therein.

\begin{proposition}\label{separable}
\label{pro: sep mon}Let $\left( L:\mathcal{B}\rightarrow \mathcal{A},R:%
\mathcal{A}\rightarrow \mathcal{B}\right) $ be an adjunction with unit $\eta
$ and counit $\epsilon $.

1) If $R$ is a separable functor then the comparison functor $R_{1}:\mathcal{%
A}\rightarrow \mathcal{B}_1$ is full and faithful.

2) Suppose that the comparison functor $R_{1}:\mathcal{A}\rightarrow {%
\mathcal{B}_1}$ has a left adjoint $L_{1}$. If $\left( L,\epsilon L\right) $
is relatively projective as a right module functor on $\left( RL,R\epsilon
L,\eta \right) ,$ then $L_{1}$ is full and faithful.

3) Suppose that the comparison functor $R_{1}:\mathcal{A}\rightarrow {%
\mathcal{B}_1}$ has a left adjoint $L_{1}$. If $R$ is a separable functor,
then $R$ is monadic.
\end{proposition}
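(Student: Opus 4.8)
The plan is to treat the three parts in order, deriving part 3) as a formal consequence of parts 1) and 2) together with Lemma \ref{lem: sep=>proj}.

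For part 1), I would invoke the characterization of separability recalled just before the statement: since $R$ is separable, \cite[Theorem 1.2]{Rafael} provides a natural transformation $\sigma:\mathrm{Id}_{\mathcal{A}}\to LR$ with $\epsilon\circ\sigma=\mathrm{Id}$. Faithfulness of $R_1$ is immediate: as $R_1f=Rf$ it suffices to see that $R$ is faithful, and this holds because each $\epsilon X$ is a split epimorphism (with section $\sigma X$), so from $Rf=Rg$ and naturality of $\epsilon$ one gets $f\circ\epsilon X=g\circ\epsilon X$, whence $f=g$. For fullness, given a morphism $\beta:(RX,R\epsilon X)\to(RY,R\epsilon Y)$ in $\mathcal{B}_1$, I would set $f:=\epsilon Y\circ L\beta\circ\sigma X$ and verify $Rf=\beta$, rewriting $Rf=R\epsilon Y\circ RL\beta\circ R\sigma X$ by means of the $RL$-linearity condition $\beta\circ R\epsilon X=R\epsilon Y\circ RL\beta$ and then collapsing with $\epsilon X\circ\sigma X=\mathrm{Id}_X$. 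This step is routine once $\sigma$ is in hand.

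Part 2) is the technical heart. Since $L_1$ exists, $L_1(B,\mu)$ is the coequalizer in $\mathcal{A}$ of the pair $L\mu,\ \epsilon LB:LRLB\rightrightarrows LB$; write $q:LB\to L_1(B,\mu)$ for the structural map. To prove $L_1$ full and faithful I would show that its unit $\eta_1$ is invertible, and since the forgetful functor $U$ reflects isomorphisms it suffices to show the underlying map $Rq\circ\eta B:B\to RL_1(B,\mu)$ of $\eta_1(B,\mu)$ is an isomorphism. The key observation is that the relative projectivity datum $\gamma:L\to LRL$ (with $\epsilon L\circ\gamma=\mathrm{Id}_L$ and $LR\epsilon L\circ\gamma RL=\gamma\circ\epsilon L$) turns this coequalizer into a \emph{split}, hence absolute, coequalizer: taking the section $s:=\gamma B$ of $\epsilon LB$ and setting $e:=L\mu\circ\gamma B$, one checks $q\circ e=q$ from $\epsilon LB\circ\gamma B=\mathrm{Id}$, and $e\circ L\mu=e\circ\epsilon LB$ from naturality of $\gamma$, the compatibility identity, and the associativity axiom $\mu\circ R\epsilon LB=\mu\circ RL\mu$; this yields the retraction $t:L_1(B,\mu)\to LB$ completing the split-coequalizer data. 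Being absolute, the coequalizer is preserved by $R$, so $\mu:RLB\to B$ (which coequalizes $RL\mu,R\epsilon LB$ by associativity) factors as $\theta\circ Rq$, and a short chase using the unit axiom $\mu\circ\eta B=\mathrm{Id}$ together with the triangular identities then shows that $\theta$ is a two-sided inverse of $Rq\circ\eta B$.

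For part 3), I would simply combine the previous two: $R$ separable gives $R_1$ full and faithful by part 1), and by Lemma \ref{lem: sep=>proj} it also makes $(L,\epsilon L)$ relatively projective, so part 2) yields $L_1$ full and faithful. Consequently the counit $\epsilon_1$ of $(L_1,R_1)$ is an isomorphism (right adjoint full and faithful) and the unit $\eta_1$ is an isomorphism (left adjoint full and faithful), so $(L_1,R_1)$ is an adjoint equivalence; in particular the comparison functor $R_1$ is an equivalence, i.e.\ $R$ is monadic. The main obstacle throughout is the split-coequalizer verification in part 2), and specifically the identity $e\circ L\mu=e\circ\epsilon LB$, where the compatibility condition of the relative projective structure must be married to the algebra associativity axiom; everything else reduces to bookkeeping with the triangular identities.
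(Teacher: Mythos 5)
Your proposal is correct and takes essentially the same route as the paper: Rafael's splitting $\sigma$ handles part 1) exactly as in the paper (your $f=\epsilon Y\circ L\beta\circ\sigma X$ is the paper's $h'$), your split-coequalizer datum $e=L\mu\circ\gamma B$ with the identity $e\circ L\mu=e\circ\epsilon LB$ is precisely the paper's contractible coequalizer (its $p$ satisfying $p\circ\pi=L\mu\circ\gamma B$ and $\pi\circ p=\mathrm{Id}$), and part 3) is assembled identically from 1), 2) and Lemma \ref{lem: sep=>proj}. The only cosmetic deviations are that you prove faithfulness via the split epimorphism $\epsilon X$ rather than the split monomorphism $\sigma Y$, and that you unpack the Beck-type criterion into an explicit inverse $\theta$ of the unit $\eta_1$ instead of citing it; neither changes the argument.
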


\begin{proof}
1)\ By assumption there is a natural transformation $\sigma :\mathrm{Id}_{%
\mathcal{A}}\rightarrow LR$ such that $\epsilon \circ \sigma =\mathrm{Id}_{%
\mathrm{Id}_{\mathcal{A}}}$. Let $f, g : X \to Y$ morphisms in $\mathcal{A}$ such that $R_1 f = R_1 g$. Since $R = U R_1$, we get
\[
\sigma Y \circ f = LR f \circ \sigma X = LR g \circ \sigma X = \sigma Y \circ  g
\]
Now, $\sigma Y$ is a monomorphism, whence $f = g$. Thus, $R_1$ is faithful. 
To check that $R_1$ is full, consider a morphism $h : R_1 X \to R_1 Y$ in $\mathcal{B}_1$, and put $h' = \epsilon Y \circ Lh \circ \sigma X$. Since $h$ is a morphism of algebras, we get
\[
R h' = R \epsilon Y \circ RL h \circ R \sigma X \overset{h\in \mathcal{B}_1}{=} h \circ R \epsilon X \circ R \sigma X = h.
\]

2) By \cite[Proof of Theorem 1]{Beck}, since $L_{1}$ exists, there exists a
morphism $\pi $ such that%
\begin{equation}
LRLB\overset{L\mu }{\underset{\epsilon LB}{\rightrightarrows }}LB\overset{%
\pi }{\longrightarrow }L_{1}\left( B,\mu \right)  \label{form:coeqprim}
\end{equation}%
is a coequalizer for all $\left( B,\mu \right) \in \mathcal{B}_1$.
Moreover $L_{1}$ is full and faithful whenever%
\begin{equation*}
RLRLB\overset{RL\mu }{\underset{R\epsilon LB}{\rightrightarrows }}RLB\overset%
{R\pi }{\longrightarrow }RL_{1}\left( B,\mu \right)
\end{equation*}%
is a coequalizer too. By assumption there is a natural transformation $%
\gamma :L\rightarrow LRL$ such that $\epsilon L\circ \gamma =\mathrm{Id}_{L}$
and $LR\epsilon L\circ \gamma RL=\gamma \circ \epsilon L$. Clearly, we have $%
\epsilon LB\circ \gamma B=\mathrm{Id}_{LB}$. Moreover%
\begin{eqnarray*}
\left( L\mu \circ \gamma B\right) \circ L\mu &=&L\mu \circ \gamma B\circ
L\mu \overset{\text{nat }\gamma }{=}L\mu \circ LRL\mu \circ \gamma RLB \\
&=&L\mu \circ \left( LR\epsilon LB\circ \gamma RLB\right) \\
&=&L\mu \circ \left( \gamma B\circ \epsilon LB\right) \\
&=&\left( L\mu \circ \gamma B\right) \circ \epsilon LB
\end{eqnarray*}%
so that there is a unique morphism $p:L_{1}\left( B,\mu \right) \rightarrow
LB$ such that $p\circ \pi =L\mu \circ \gamma B.$ We have%
\begin{equation*}
\pi \circ p\circ \pi =\pi \circ L\mu \circ \gamma B=\pi \circ \epsilon
LB\circ \gamma B=\pi
\end{equation*}%
so that, since $\pi $ is an epimorphism, we get $\pi \circ p=\mathrm{Id}%
_{L_{1}\left( B,\mu \right) }$. We have so proved that (\ref{form:coeqprim})
is a contractible coequalizer. Thus it is preserved by any functor, in
particular by $R$. Thus $L_{1}$ is full and faithful too.

3)
It follows from 1), 2) and Lemma \ref{lem: sep=>proj} that both $L_{1}$ and $R_{1}$ are full and faithful.
\end{proof}

\section{Examples}\label{examples}

Let us fix a field $\Bbbk$. Vector spaces and algebras
are meant to be over $\Bbbk$.  From any vector space $V$ we can construct its tensor algebra $TV=\Bbbk \oplus V\oplus
V^{\otimes 2}\oplus \cdots $. In fact, this is the object part of a functor $T : Vect_\Bbbk \to Alg_\Bbbk$ from the category $Vect_\Bbbk$ of vector spaces to the category $Alg_\Bbbk$ of (associative and unital) algebras. By $\Omega : Alg_\Bbbk \to Vect_\Bbbk$ we denote the forgetful functor.

\subsection{Vector spaces and algebras}

\begin{example}
\label{ex: VecAlg} If $A$ is an algebra, and $V$ a vector space, then the universal property of $TV$ gives a bijection
\begin{equation}\label{universaltensor}
Alg_\Bbbk (TV, A) \cong Vect_\Bbbk (V, \Omega A),
\end{equation}
which is natural in both variables. In other words, the functor $T : Vect_\Bbbk \to Alg_\Bbbk$ is left adjoint to the forgetful functor $\Omega : Alg_\Bbbk \to Vect_\Bbbk$. It is very well-known that $\Omega$ is a monadic functor (cf. \cite[Proposition 4.6.2]{Borceux2}). Next, we check that $T$ is a comonadic functor.

In fact, given  $V\in Vect_\Bbbk$,  consider  the canonical
projection $\pi =\pi V:\Omega TV\rightarrow V$  on degree one. Let us check that it is natural in $V.$ Let $%
f:V\rightarrow V^{\prime }$ be a morphism in $Vect_\Bbbk$. For all $z\in
V^{\otimes n}$ with $n\neq 1$,%
\begin{equation*}
\left( \pi V^{\prime }\circ \Omega T f\right) \left( z\right) =\pi V^{\prime
}\left( f^{\otimes n}\left( z\right) \right) =0=\left( f\circ \pi V\right)
\left( z\right) .
\end{equation*}%
For $v\in V,$ we have%
\begin{equation*}
\left( \pi V^{\prime }\circ \Omega T f\right) \left( v\right) =\pi V^{\prime
}\left( f\left( v\right) \right) =f\left( v\right) =\left( f\circ \pi
V\right) \left( v\right) .
\end{equation*}%
so that $\pi V^{\prime }\circ \Omega T f=f\circ \pi V$ and $\pi V$ is natural in $V$%
. Moreover, we have $\pi V\circ i_{V}=\mathrm{Id}_{V}$, where $i_V : V \to \Omega T V$ is the canonical inclusion map
for every $V\in Vect_\Bbbk$. Since $i_V$ gives the unit of the adjunction at $V$, we can apply Rafael's Theorem \cite[Theorem 1.2]%
{Rafael}, to obtain that $T$ is a separable functor. By the dual version of
Proposition \ref{pro: sep mon}, in order to prove that $T$ is comonadic it
suffices to check that $T^{1}$ has a right adjoint. This follows by Beck's
Theorem \cite[Proof of Theorem 1]{Beck} as $Vect_\Bbbk$ has equalizers.
\end{example}

\subsection{Vector spaces and bialgebras}

\begin{example}
\label{ex: Vec-Bialg}
 Let $Bialg_\Bbbk$ be the category of bialgebras and
$
\Omega : Bialg_\Bbbk \rightarrow Vect_\Bbbk
$%
be the forgetful functor. By  $P : Bialg_\Bbbk \to Vect_\Bbbk$ we denote the functor that sends a bialgebra $A$ to its space $PA$ of primitive elements.  Obviously, $P$ is a subfunctor of $\Omega$, let  $j:P\rightarrow \Omega$ denote the inclusion natural transformation. We know that the tensor algebra $TV$ of a vector space $V$ is already a bialgebra. Therefore, the bijection \eqref{universaltensor} gives, by restriction, a bijection
\[
Bialg_\Bbbk (TV, A) \cong Vect_\Bbbk (V, PA)
\]
which, of course, is natural.

In this way, we see that $T$ is left adjoint to $P$. We will prove that $P$ has a\emph{\ }monadic decomposition of monadic length at most
$2.$ First we need to prove a technical result.
\end{example}

\begin{lemma}
\label{lem: coeq}Let $\left( L:\mathcal{B}\rightarrow \mathcal{A},R:\mathcal{%
A}\rightarrow \mathcal{B}\right) $ be an adjunction and let $\left( B,\mu
\right) \in \mathcal{B}_1$. Let $\zeta :B\rightarrow Z$ be a morphism
in $\mathcal{B}$. Then%
\begin{equation*}
\zeta \circ L\mu =\zeta \circ \epsilon LB\Leftrightarrow R\zeta \circ \eta
B\circ \mu =R\zeta .
\end{equation*}
\end{lemma}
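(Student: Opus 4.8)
The plan is to read both equations as equalities of parallel arrows and to observe that the left-hand pair and the right-hand pair correspond to one another under the adjunction isomorphism; the asserted equivalence is then nothing but the bijectivity of that correspondence. For this one first pins down the domains: since $(B,\mu)\in\mathcal{B}_1$ has structure map $\mu\colon RLB\to B$, the two composites $\zeta\circ L\mu$ and $\zeta\circ\epsilon LB$ are arrows $L(RLB)\to Z$ in $\mathcal{A}$ (so that $\zeta$ is an arrow with source $LB$), whereas $R\zeta\circ\eta B\circ\mu$ and $R\zeta$ are arrows $RLB\to RZ$ in $\mathcal{B}$. The tool is therefore the natural bijection
\[
\Phi\colon\mathcal{A}\bigl(L(RLB),Z\bigr)\xrightarrow{\ \cong\ }\mathcal{B}(RLB,RZ),\qquad \phi\longmapsto R\phi\circ\eta(RLB),
\]
and I would show that it carries the left-hand pair exactly onto the right-hand pair.

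The verification is two short transports. Applying $\Phi$ to $\zeta\circ L\mu$ and using functoriality gives $R\zeta\circ RL\mu\circ\eta(RLB)$; naturality of $\eta$ at the morphism $\mu\colon RLB\to B$ rewrites $RL\mu\circ\eta(RLB)=\eta B\circ\mu$, so $\Phi(\zeta\circ L\mu)=R\zeta\circ\eta B\circ\mu$. Applying $\Phi$ to $\zeta\circ\epsilon LB$ gives $R\zeta\circ R\epsilon LB\circ\eta(RLB)$; the triangular identity $R\epsilon\circ\eta R=\mathrm{Id}_R$ evaluated at $LB$ yields $R\epsilon LB\circ\eta(RLB)=\mathrm{Id}_{RLB}$, so $\Phi(\zeta\circ\epsilon LB)=R\zeta$.

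With these two images computed, the conclusion is immediate: since $\Phi$ is a bijection, $\zeta\circ L\mu=\zeta\circ\epsilon LB$ holds if and only if $\Phi(\zeta\circ L\mu)=\Phi(\zeta\circ\epsilon LB)$, that is, if and only if $R\zeta\circ\eta B\circ\mu=R\zeta$. I do not anticipate any genuine obstacle here; the only point demanding attention is the bookkeeping of the objects at which $\eta$ and $\epsilon$ are evaluated, namely that naturality is invoked at $\mu$ and the triangular identity at $LB$. One could equally run the argument through the inverse bijection $\psi\mapsto\epsilon Z\circ L\psi$ and obtain the equivalence by a direct computation, the two routes being interchangeable.
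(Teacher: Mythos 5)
Your proposal is correct and is essentially the paper's own proof: both apply the adjunction bijection $\Phi\colon\mathrm{Hom}_{\mathcal{A}}(L(RLB),Z)\to\mathrm{Hom}_{\mathcal{B}}(RLB,RZ)$, $\phi\mapsto R\phi\circ\eta RLB$, to the two sides, using naturality of $\eta$ at $\mu$ and the triangular identity at $LB$ to identify the images as $R\zeta\circ\eta B\circ\mu$ and $R\zeta$. Your write-up merely makes explicit the typing point (that $\zeta$ must have source $LB$, despite the statement's phrasing) and the two intermediate computations that the paper compresses into its chain of equivalences.
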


\begin{proof}
Consider the canonical isomorphism $\Phi \left( X,Y\right) :\mathrm{Hom}_{%
\mathcal{A}}\left( LX,Y\right) \rightarrow \mathrm{Hom}_{\mathcal{B}}\left(
X,RY\right) $ defined by $\Phi \left( X,Y\right) f=Rf\circ \eta X.$ Then%
\begin{eqnarray*}
\zeta \circ L\mu &=&\zeta \circ \epsilon LB\Leftrightarrow \\
\Phi \left( RLB,Z\right) \left[ \zeta \circ L\mu \right] &=&\Phi \left(
RLB,Z\right) \left[ \zeta \circ \epsilon LB\right] \Leftrightarrow \\
R\left[ \zeta \circ L\mu \right] \circ \eta RLB &=&R\left[ \zeta \circ
\epsilon LB\right] \circ \eta RLB\Leftrightarrow \\
R\zeta \circ RL\mu \circ \eta RLB &=&R\zeta \circ R\epsilon LB\circ \eta
RLB\Leftrightarrow \\
R\zeta \circ \eta B\circ \mu &=&R\zeta .
\end{eqnarray*}
\end{proof}

\begin{theorem}\label{primitives}
\label{teo: classicLie}The functor $P$ has a\emph{\ }monadic decomposition
of monadic length at most $2$. Keep the notations of Definition \ref{def: comparable} (so, in particular, $\mathcal{B}_0 = Vect_\Bbbk$, $R_0 = P$,
and $L_0 = T$).
\begin{itemize}
\item[1)] The functor $L_{1}$ is given, for all $\left( V_{0},\mu
_{0}\right) \in \mathcal{B}_{1}$, by
\begin{equation*}
L_{1}\left( V_{0},\mu _{0}\right) =\frac{L_{0}V_{0}}{\left( \mathrm{Im}%
\left( \mathrm{Id}_{R_{0}L_{0}V_{0}}-\eta _{0}V_{0}\circ \mu _{0}\right)
\right) }.
\end{equation*}

\item[2)] The adjunction $\left( L_{1},R_{1}\right) $ is idempotent.

\item[4)] For all $V_{2}:=\left( \left( V_{0},\mu _{0}\right) ,\mu
_{1}\right) \in \mathcal{B}_{2},$ we have the following cases.

\begin{itemize}
\item $\mathrm{char}\Bbbk =0$. Then, for all $x,y\in V_{0}$ we have that $%
xy-yx\in R_{0}L_{0}V_{0}.$ Define a map $\left[ -,-\right] :V_{0}\otimes
V_{0}\rightarrow V_{0}$ by setting $\left[ x,y\right] :=\mu _{0}\left(
xy-yx\right) .$ Then $\left( V_{0},\left[ -,-\right] \right) $ is an
ordinary Lie algebra and $L_{2}V_{2}$ is the universal enveloping algebra%
\begin{equation*}
\mathfrak{U}V_{0}:=\frac{TV_{0}}{\left( xy-yx-\left[ x,y\right] \mid x,y\in
V_{0}\right) }.
\end{equation*}

\item $\mathrm{char}\Bbbk =p,$ a prime. Then, for all $x,y\in V_{0}$ we have
that $xy-yx,x^{p}\in R_{0}L_{0}V_{0}.$ Define two maps $\left[ -,-\right]
:V_{0}\otimes V_{0}\rightarrow V_{0}$ and $-^{\left[ p\right]
}:V_{0}\rightarrow V_{0}$ by setting $\left[ x,y\right] :=\mu _{0}\left(
xy-yx\right) $ and $x^{\left[ p\right] }:=\mu _{0}\left( x^{p}\right) .$
Then $\left( V_{0},\left[ -,-\right] ,-^{\left[ p\right] }\right) $ is a
restricted Lie algebra and $L_{2}V_{2}$ is the restricted enveloping algebra%
\begin{equation*}
\mathfrak{u}V_{0}:=\frac{TV_{0}}{\left( xy-yx-\left[ x,y\right] ,x^{p}-x^{%
\left[ p\right] }\mid x,y\in V_{0}\right) }.
\end{equation*}
\end{itemize}
\end{itemize}
\end{theorem}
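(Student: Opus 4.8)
The plan is to treat the three assertions in the order $1)$, $2)$, $4)$, obtaining the bound on the monadic length of $P$ as a formal consequence of $2)$. Throughout I keep the notation $L_{0}=T$, $R_{0}=P$, and I use that $P$ is a subfunctor of $\Omega$, so that $P\zeta$ is the restriction of $\zeta$ to primitives.

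For $1)$, I would invoke the description of $L_{1}$ recalled in \eqref{form:coeqprim}: for $\left(V_{0},\mu_{0}\right)\in\mathcal{B}_{1}$ the object $L_{1}\left(V_{0},\mu_{0}\right)$ is the coequalizer in $Bialg_\Bbbk$ of the pair $L_{0}\mu_{0},\epsilon_{0}L_{0}V_{0}:L_{0}R_{0}L_{0}V_{0}\rightrightarrows L_{0}V_{0}$, i.e. of $T\mu_{0},\epsilon_{0}TV_{0}:TPTV_{0}\rightrightarrows TV_{0}$. Let $I$ be the two-sided ideal of $TV_{0}$ generated by $\mathrm{Im}\left(\mathrm{Id}_{R_{0}L_{0}V_{0}}-\eta_{0}V_{0}\circ\mu_{0}\right)\subseteq PTV_{0}$. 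Since this image consists of primitive elements, $I$ is a biideal, so $TV_{0}/I$ is a bialgebra and the projection $\pi:TV_{0}\to TV_{0}/I$ is a morphism in $Bialg_\Bbbk$. To see that $\pi$ is the required coequalizer I would take an arbitrary bialgebra map $\zeta:TV_{0}\to Z$: by Lemma \ref{lem: coeq} (applied to $\left(L_{0},R_{0}\right)$ and $\left(V_{0},\mu_{0}\right)$) $\zeta$ coequalizes the pair if and only if $P\zeta\circ\eta_{0}V_{0}\circ\mu_{0}=P\zeta$, that is if and only if $P\zeta$ annihilates $\mathrm{Im}\left(\mathrm{Id}_{R_{0}L_{0}V_{0}}-\eta_{0}V_{0}\circ\mu_{0}\right)$; since $\zeta$ is an algebra map, this holds precisely when $\zeta$ kills $I$, i.e. when $\zeta$ factors through $\pi$. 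This yields the formula in $1)$.

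For $2)$, by Proposition \ref{pro:idempotent} together with Remark \ref{re:etaU} it suffices to prove that the structure map of every object of $\mathcal{B}_{2}$ is invertible, equivalently that $\eta_{1}U_{1,2}$ is an isomorphism. The component of $\eta_{1}$ at $\left(V_{0},\mu_{0}\right)$ is the map $V_{0}\to P\left(TV_{0}/I\right)$ induced by $V_{0}\hookrightarrow TV_{0}\overset{\pi}{\to}TV_{0}/I$; it is a split monomorphism, split by the structure map $\mu_{1}$, so the point is to show that it is surjective, i.e. that the primitives of $TV_{0}/I$ are exactly $\pi\left(V_{0}\right)$. This is where the structure theory of primitives enters, and I expect it to be the main obstacle. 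Granting it, the monad $R_{1}L_{1}$ is idempotent, so by Proposition \ref{pro:idemmonad2} the functor $R_{1}$ has monadic length at most $1$; hence $L_{2}=L_{1}U_{1,2}$ is full and faithful and $P=R_{0}$ has monadic length at most $2$.

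For $4)$, I would first note that computing $\Delta\left(xy-yx\right)$, and in characteristic $p$ also $\Delta\left(x^{p}\right)=\left(x\otimes1+1\otimes x\right)^{p}$, shows that $xy-yx$ and $x^{p}$ are primitive in $TV_{0}$, so they lie in $R_{0}L_{0}V_{0}=PTV_{0}$ and $\left[x,y\right]:=\mu_{0}\left(xy-yx\right)$, $x^{\left[p\right]}:=\mu_{0}\left(x^{p}\right)$ are well defined. The axioms of an ordinary (resp. restricted) Lie algebra would then be checked from the $\mathcal{B}_{1}$--algebra identities for $\mu_{0}$ together with the $\mathcal{B}_{2}$--compatibility of $\mu_{1}$. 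Finally $L_{2}V_{2}=L_{1}U_{1,2}V_{2}=L_{1}\left(V_{0},\mu_{0}\right)=TV_{0}/I$, and it remains to identify $I$ with the enveloping ideal: the degree-two part of the generators gives $xy-yx-\left[x,y\right]$ and, in characteristic $p$, the $p$th powers give $x^{p}-x^{\left[p\right]}$, while the remaining generators $w-\mu_{0}\left(w\right)$ with $w$ of higher bracket-length are consequences of these. This identifies $TV_{0}/I$ with $\mathfrak{U}V_{0}$, resp. $\mathfrak{u}V_{0}$.

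The hard part is the primitives computation underlying both $2)$ and $4)$: that $PTV_{0}$ is the free Lie (resp. free restricted Lie) algebra on $V_{0}$, and that the primitives of the quotient $TV_{0}/I$ reduce to $\pi\left(V_{0}\right)$ (a Friedrichs/Milnor--Moore/PBW phenomenon). The characteristic $p$ case is the delicate one, since one must keep track of the $p$--operation $x\mapsto x^{p}$ throughout and ensure that the restricted-enveloping relations already force $w=\mu_{0}\left(w\right)$ for every primitive $w$. Controlling these higher relations, rather than the formal adjoint-functor bookkeeping, is what the argument ultimately rests on.
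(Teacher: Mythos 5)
Your part 1) is complete and coincides with the paper's argument: existence of $L_{1}$ via Beck's theorem (using that $Bialg_\Bbbk$ has coequalizers), the observation that $S:=\mathrm{Im}\left( \mathrm{Id}_{R_{0}L_{0}V_{0}}-\eta _{0}V_{0}\circ \mu _{0}\right)$ consists of primitives so that the ideal $(S)$ is a biideal, and Lemma \ref{lem: coeq} to verify the coequalizer property. Your reduction of 2) to the invertibility of $\eta_{1}U_{1,2}$ (via Proposition \ref{pro:idempotent} and Remark \ref{re:etaU}), the split-monomorphism observation using $\mu_{1}$, and the derivation of the length bound from Proposition \ref{pro:idemmonad2} are likewise exactly the paper's reductions. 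But at the decisive point you write ``Granting it'': the surjectivity statement --- that the primitives of $TV_{0}/(S)$ are exactly $\pi(V_{0})$ when $(V_{0},\mu_{0})$ carries a $\mathcal{B}_{2}$-structure --- is never proved, and the analogous assertion in 4), that the higher-length generators $w-\mu_{0}(w)$ of $(S)$ are consequences of the quadratic (and, in characteristic $p$, the $p$-power) relations, is likewise only asserted. These two claims are not bookkeeping: they are the entire mathematical content of the theorem beyond part 1), and neither follows from the adjunction formalism you have set up.

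The paper fills precisely this gap by recognizing the quotient as a known object: decomposing $R_{0}L_{0}V_{0}=V_{0}\oplus EV_{0}$ and setting $b:=\mu_{0}\circ x_{2}$, one identifies $L_{1}(V_{0},\mu_{0})$ with the universal enveloping algebra $U(V_{0},c,b)$ of a braided vector space with bracket in the sense of \cite[Definition 3.5]{Ardi-PrimGen}, where $c$ is the flip. The $\mathcal{B}_{2}$-structure $\mu_{1}$ forces the canonical map $i_{U}:V_{0}\to U(V_{0},c,b)$ to be injective, which is exactly the condition making $(V_{0},c,b)$ a \emph{braided Lie algebra} (\cite[Definition 4.1]{Ardi-PrimGen}); since the flip braiding has combinatorial rank at most one (\cite[Example 6.10]{Ardi-AMM} in characteristic $0$, \cite[Example 3.13]{Ardi-PBW} in characteristic $p$), \cite[Corollary 5.5]{Ardi-PrimGen} then yields that $U_{0,1}\eta_{1}V_{1}$ is an isomorphism --- this is your missing surjectivity. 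Part 4) is then obtained from \cite[Remark 6.4]{Ardi-PrimGen} (characteristic $0$) and the argument of \cite[Example 3.13]{Ardi-PBW} (characteristic $p$), which encapsulate the Friedrichs/Milnor--Moore/PBW input you correctly flag as delicate, in particular the bookkeeping of the $p$-operation in positive characteristic (where $PTV_{0}$ is the free \emph{restricted} Lie algebra). So your skeleton is the right one, but without these external results --- or a self-contained replacement for them, which would amount to reproving a Milnor--Moore-type theorem --- the proof of 2) and 4) is incomplete.
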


\begin{proof}
Note that $\mathcal{A} = Bialg_\Bbbk$ has coequalizers (see e.g. \cite[page 1478]{Agore}).
Thus, using the notations of Definition \ref{def: comparable}, by Beck's
Theorem \cite[Proof of Theorem 1]{Beck}, the functors $L_{1}$ and $L_{2}$
exist. By construction, for every $V_{1}:=\left( V_{0},\mu
_{0}:R_{0}L_{0}V_{0}\rightarrow V_{0}\right) \in \mathcal{B}_{1}$ we have
that $L_{1}V_{1}$ is given by the coequalizer in $\mathcal{A}$ of the
diagram
\begin{equation*}
L_{0}R_{0}L_{0}V_{0}\overset{L_{0}\mu _{0}}{\underset{\epsilon _{0}L_{0}V_{0}%
}{\rightrightarrows }}L_{0}V_{0}.
\end{equation*}%
We want to compute explicitly this coequalizer. To this aim, we set%
\begin{equation*}
T_{1}V_{1}:=\frac{L_{0}V_{0}}{\left( S\right) },
\end{equation*}%
where $S:=\mathrm{Im}\left( \mathrm{Id}_{R_{0}L_{0}V_{0}}-\eta
_{0}V_{0}\circ \mu _{0}\right) ,$ and let us check it is a bialgebra. It is
enough to check that
\begin{eqnarray*}
\Delta _{L_{0}V_{0}}S &\subseteq &\left( S\right) \otimes
L_{0}V_{0}+L_{0}V_{0}\otimes \left( S\right) , \\
\varepsilon _{L_{0}V_{0}}S &=&0.
\end{eqnarray*}%
Both equalities follow trivially since $S\subseteq R_{0}L_{0}V_{0}=PTV.$
Hence $T_{1}V_{1}\in \mathcal{A}$. Let us check that
\begin{equation*}
L_{0}R_{0}L_{0}V_{0}\overset{L_{0}\mu _{0}}{\underset{\epsilon _{0}L_{0}V_{0}%
}{\rightrightarrows }}L_{0}V_{0}\overset{\pi }{\longrightarrow }T_{1}V_{1}
\end{equation*}%
is a coequalizer in $\mathcal{A}$, where $\pi $ is the canonical projection.
Let $\zeta :L_{0}V_{0}\rightarrow Z$ be a morphism in $\mathcal{A}$. By
Lemma \ref{lem: coeq}%
\begin{equation*}
\zeta \circ L_{0}\mu _{0}=\zeta \circ \epsilon _{0}L_{0}V_{0}\Leftrightarrow
R_{0}\zeta \circ \eta _{0}V_{0}\circ \mu _{0}=R_{0}\zeta \Leftrightarrow
\zeta \text{ vanishes on }S.
\end{equation*}%
Hence we can take $L_{1}V_{1}:=T_{1}V_{1}.$

We need to describe $%
L_{1}V_{1}$ in a different way for every $V_{1}:=\left( V_{0},\mu
_{0}\right) \in \mathcal{B}_{1}$. Note that $R_{0}L_{0}V_{0}=V_{0}\oplus
EV_{0}$ where $EV_{0}$ denotes the subspace of $\Omega L_{0}V_{0}$ spanned
by primitive elements of homogeneous degree greater than one. Let $%
x_{1}=\eta _{0}V_{0}:V_{0}\rightarrow R_{0}L_{0}V_{0}$ and $%
x_{2}:EV_{0}\rightarrow R_{0}L_{0}V_{0}$ be the canonical injections and set
$b:=\mu _{0}\circ x_{2}:EV_{0}\rightarrow V_{0}$. Let $c:V_{0}\otimes
V_{0}\rightarrow V_{0}\otimes V_{0}$ be the canonical flip. Then $b$ is a
bracket for the braided vector space $\left( V_{0},c\right) $ in the sense
of \cite[Definition 3.2]{Ardi-PrimGen}. We compute%
\begin{equation*}
\left( \mathrm{Id}_{R_{0}L_{0}V_{0}}-\eta _{0}V_{0}\circ \mu _{0}\right)
\circ x_{1}=x_{1}-\eta _{0}V_{0}\circ \mu _{0}\circ x_{1}=\eta
_{0}V_{0}-\eta _{0}V_{0}\circ \mu _{0}\circ \eta _{0}V_{0}=0
\end{equation*}%
so that%
\begin{equation*}
S=\mathrm{Im}\left( \mathrm{Id}_{R_{0}L_{0}V_{0}}-\eta _{0}V_{0}\circ \mu
_{0}\right) =\mathrm{Im}\left[ \left( \mathrm{Id}_{R_{0}L_{0}V_{0}}-\eta
_{0}V_{0}\circ \mu _{0}\right) \circ x_{2}\right] =\mathrm{Im}\left(
x_{2}-\eta _{0}V_{0}\circ b\right)
\end{equation*}%
and hence%
\begin{equation*}
L_{1}V_{1}=\frac{L_{0}V_{0}}{\left( S\right) }=\frac{L_{0}V_{0}}{\left(
\mathrm{Im}\left( x_{2}-\eta _{0}V_{0}\circ b\right) \right) }=\frac{%
L_{0}V_{0}}{\left( z-b\left( z\right) \mid z\in EV_{0}\right) }.
\end{equation*}%
Therefore $L_{1}V_{1}=U\left( V_{0},c,b\right) $ in the sense of \cite[%
Definition 3.5]{Ardi-PrimGen}.

Let now $V_{2}:=\left( V_{1},\mu _{1}\right) \in \mathcal{B}_{2}$. Then $%
V_{1}$ is of the form $\left( V_{0},\mu _{0}\right) $. By construction, the
unit of the adjunction is the unique map $\eta _{1}V_{1}:V_{1}\rightarrow
R_{1}L_{1}V_{1}$ such that%
\begin{equation*}
U_{0,1}\eta _{1}V_{1}=R_{0}\pi \circ \eta _{0}V_{0}.
\end{equation*}

 Consider the canonical map $i_{U}:V_{0}\rightarrow U\left( V_{0},c,b\right) $
i.e.%
\begin{equation*}
i_{U}=\Omega \pi \circ jL_{0}V_{0}\circ \eta _{0}V_{0}=jL_{1}V_{1}\circ
R_{0}\pi \circ \eta _{0}V_{0}=jL_{1}V_{1}\circ U_{0,1}\eta _{1}V_{1}
\end{equation*}%
so that $i_{U}$ corestricts to $U_{0,1}\eta _{1}V_{1}.$ Now
\begin{equation*}
U_{0,1}\mu _{1}\circ U_{0,1}\eta _{1}V_{1}=U_{0,1}\left( \mu _{1}\circ \eta
_{1}V_{1}\right) =\mathrm{Id}_{V_{0}}
\end{equation*}%
so that $U_{0,1}\eta _{1}V_{1}$ is injective. Therefore $i_{U}$ is
injective. This means that $\left( V_{0},c,b\right) $ is a braided Lie
algebra in the sense of \cite[Definition 4.1]{Ardi-PrimGen}. Let $\mathcal{S}
$ denote the class of braided vector spaces of combinatorial rank at most
one. Then $\left( V_{0},c\right) \in \mathcal{S}$

(see \cite[Example 6.10]{Ardi-AMM}, if $\mathrm{char}\left( \Bbbk \right) =0$%
, and \cite[Example 3.13]{Ardi-PBW}, if $\mathrm{char}\left( \Bbbk \right)
\neq 0$).

By \cite[Corollary 5.5]{Ardi-PrimGen}, we have that $U_{0,1}\eta _{1}V_{1}$
is an isomorphism. Since $U_{0,1}$ reflects isomorphism, we get that $\eta
_{1}V_{1}$ is an isomorphism. We have so proved that $\eta _{1}U_{1,2}$ is
an isomorphism. By Remark \ref{re:etaU}, we have that the adjunction $\left( L_{1},R_{1}\right)
$ is idempotent. By Proposition \ref{pro:idemmonad2}, the functor $R_{1}$
has a\emph{\ }monadic decomposition of monadic length at most $1$ so that $R$
has monadic decomposition of monadic length at most $2.$

We have observed that $\left( V_{0},c,b\right) $ is a braided Lie algebra in
the sense of \cite[Definition 4.1]{Ardi-PrimGen}.

The last part of the statement follows by \cite[Remark 6.4]{Ardi-PrimGen} in
case $\mathrm{char}\Bbbk =0$ and by the same argument as in \cite[Example
3.13]{Ardi-PBW} in case $\mathrm{char}\Bbbk =p$.
\end{proof}

\begin{remark}
In the setting of Theorem \ref{teo: classicLie}, $R=P:\mathcal{A}\rightarrow
\mathcal{B}$ has a\emph{\ }monadic decomposition of monadic length at most $2$.
Thus, by Theorem \ref{teo: descentFunct}, we have that%
\begin{equation*}
\mathrm{Im}R=\mathrm{Im}U_{0,2}.
\end{equation*}%
Note, since $\left( L_{1},R_{1}\right) $ is idempotent, we can apply
Proposition \ref{pro:idempotent} to get that an object in $\mathrm{Im}%
U_{0,2}$ is isomorphic to an object of the form $U_{0,2}\left( V_{1},\mu
_{1}\right) =U_{0,1}V_{1}$ for some $V_{1}\in \mathcal{B}_{1}$ such that $%
\eta _{1}V_{1}$ is an isomorphism.
\end{remark}

\begin{remark}
Let $\left( L,R\right) $ be the adjunction considered in \ref{ex: Vec-Bialg}%
. For a moment let $L^{\prime }$ denote the left adjoint $L$ of Example \ref%
{ex: VecAlg}. Let $W$ be the forgetful functor from the category of
bialgebras to the category of algebras. Then $W\circ L=L^{\prime }.$ Hence,
in view of \cite[Lemma 1.1]{Nastasescu/alt:1989}, from separability of $%
L^{\prime }$ we deduce separability of $L.$ Since $\mathcal{B}$ has all
equalizers, by the dual version of Beck's Theorem \cite[Proof of Theorem 1]%
{Beck}, we have that the comparison functor $L^{1}:\mathcal{B}\rightarrow {%
\mathcal{A}^1}$ has a right adjoint $R^{1}.$ Thus, by the dual version of
Proposition \ref{pro: sep mon}, we have that $L$ is comonadic.

Now, as observed in \cite[Theorem 2.3]{Agore}, in view of \cite[page 134]{Sw}%
, the functor $W$ has a right adjoint, say $\Gamma $. Explicitly $\Gamma A$
is the cofree bialgebra associated to $A$, for any algebra $A$. Now $\left(
WL,R\Gamma \right) $ is an adjunction as composition of adjunctions. Since $%
W\circ L=L^{\prime }$ and $\left( L^{\prime },R^{\prime }\right) $ is an
adjunction, we deduce that $R\Gamma $ is functorially isomorphic to $%
R^{\prime }$.
\end{remark}

\subsection{Pretorsion theories}

\begin{example}
Let $A$ be a ring and let $\mathcal{T}$ be a full subcategory of $Mod$-$A$
closed under submodules, quotients and direct sums i.e. $\mathcal{T}$ is an
hereditary pretorsion class. Let $t:Mod$-$A\rightarrow \mathcal{T}$ be the
associated left exact preradical (\cite[Corollary 1.8 page 138]{Stenstroem}%
). Then $R=t$ is a right adjoint of the inclusion functor $L=i:\mathcal{T}%
\rightarrow Mod$-$A$. Note that $RL=\mathrm{Id}_{\mathcal{T}}$ and $\eta =%
\mathrm{Id}_{\mathrm{Id}_{\mathcal{T}}}$ so that $L$ is full and faithful. Hence, $R$ has a\emph{\ }monadic decomposition of monadic
length\emph{\ }$0$. By Remark \ref{coalgebras}, the comparison functor  $C: \mathcal{T}\rightarrow ({Mod}\text{-}A)^1$ is a category equivalence.

As a particular example we consider the case when $A=C^{\ast }$ for some
coalgebra $C$ over a field $\Bbbk $ and $\mathcal{T}$ is the class of
rational right $C^{\ast }$-modules i.e. the image of the canonical functor $C
$-$CoMod\rightarrow Mod$-$C^{\ast }$.
\end{example}

\end{document}